\documentclass[a4paper]{amsart}
\numberwithin{equation}{section}
\newtheorem{theorem}{Theorem}[section]
\newtheorem{corollary}{Corollary}[section]
\newtheorem{definition}{Definition}[section]
\newtheorem{lemma}{Lemma}[section]

\theoremstyle{remark}
\newtheorem{remark}{Remark}[section]
\usepackage{hyperref}
\usepackage{color}
\usepackage{graphicx}
\usepackage[margin=2cm]{geometry}
\usepackage{epstopdf}
\title[Further results for starlike functions]
 {Further results for starlike functions related with Booth lemniscate}
\subjclass[2010]{30C45}
\keywords{Booth lemniscate, radius of satarlikeness, starlike function, convex function, subordination.}
\begin{document}
\begin{abstract}
In this paper we investigate an interesting subclass $\mathcal{BS}(\alpha)$ ($0\leq \alpha<1$) of starlike functions in the unit disk $\Delta$. The class $\mathcal{BS}(\alpha)$ was introduced by Kargar et al. [R. Kargar, A. Ebadian and J. Sok\'{o}{\l}, {\it On Booth lemniscate and starlike functions}, Anal. Math. Phys. (2017) DOI: 10.1007/s13324-017-0187-3] which is strongly related to the Booth lemniscate. Some geometric properties of this class of analytic functions including, radius of starlikeness of order $\gamma$ ($0\leq\gamma<1$), the image of $f(\{z:|z|<r\})$ when $f\in \mathcal{BS}(\alpha)$, an special example  and estimate of bounds for ${\rm Re}\{f(z)/z\}$ are studied.
\end{abstract}


\maketitle
\section{Introduction}
Let $\mathcal{H}$ denote the class of analytic functions in the open unit disk $\Delta = \{z\in \mathbb{C} : |z| < 1\}$ on the complex plane $\mathbb{C}$. Also let $\mathcal{A}$ denote
the subclass of $\mathcal{H}$ including of functions normalized by $f(0)=f'(0)-1=0$. The subclass of $\mathcal{A}$ consists of all univalent functions $f(z)$ in
$\Delta$ is denoted by $\mathcal{S}$. We denote by $\mathfrak{B}$ the class of functions $w(z)$ analytic  in
$\Delta$ with $w(0) = 0$ and $|w(z)| < 1$, $(z \in \Delta)$. For two analytic and normalized functions $f$ and $g$, we say that $f$ is subordinate to $g$, written $f \prec g$ in $\Delta$, if there exists a function $w\in\mathfrak{B}$ such that $f (z) = g(w(z))$ for all
$z\in\Delta$.
In special case, if the function $g$ is univalent in $\Delta$, then
\begin{equation*}
    f (z)\prec g(z) \Leftrightarrow \left(f (0) = g(0)\quad {\rm and}\quad f (\Delta)\subset g(\Delta) \right).
\end{equation*}
It is easy to see that for any complex numbers $\lambda\neq0$ and $\mu$, we have:
\begin{equation}\label{lambdamu}
    f (z)\prec g(z) \Rightarrow \lambda f(z)+\mu\prec\lambda g(z)+\mu.
\end{equation}
The set of all functions $f\in \mathcal{A}$ that are starlike
univalent in $\Delta$ will be denoted by $\mathcal{S}^*$ and the set of all functions $f\in \mathcal{A}$ that are convex univalent in $\Delta$ will be denoted by $\mathcal{K}$. 
Robertson (see \cite{ROB}) introduced and studied the class $\mathcal{S}^*(\gamma)$ of starlike functions of order $\gamma\leq1$ as follows
\begin{equation*}
   \mathcal{S}^*(\gamma):=\left\{ f\in \mathcal{A}:\ \ {\rm Re} \left\{\frac{zf'(z)}{f(z)}\right\}> \gamma, \ z\in
    \Delta\right\}.
\end{equation*}
We note that if $\gamma\in[0,1)$, then a function in $\mathcal{S}^*(\gamma)$ is univalent. Also we say that $f\in\mathcal{K}(\gamma)$ (the class of convex functions of order $\gamma$) if and only if $zf'(z)\in \mathcal{S}^*(\gamma)$. In particular we put
$\mathcal{S}^*(0)\equiv \mathcal{S}^*$ and $\mathcal{K}(0)\equiv \mathcal{K}$.

Recently, Kargar et al. \cite{KarEba} introduced and studied a class functions related to the Booth lemniscate as follows.
\begin{definition}\label{defBS}{\rm(}see \cite{KarEba}{\rm)}
The function $f\in\mathcal{A}$ belongs to the class
$\mathcal{BS}(\alpha)$, $0\leq \alpha<1$, if it satisfies the
condition
  \begin{equation}\label{defsob}
     \left(\frac{zf'(z)}{f(z)}-1\right)\prec \frac{z}{1-\alpha z^2}\qquad (z\in\Delta).
  \end{equation}
\end{definition}
Recall that \cite{psok}, a one-parameter family of functions given by
\begin{equation}\label{Falpha}
  F_{\alpha}(z):=\frac{z}{1-\alpha z^2}=\sum_{n=1}^{\infty}\alpha^{n-1}z^{2n-1}\qquad (z\in\Delta,~0\leq \alpha\leq1).
\end{equation}
are starlike univalent when $0\leq \alpha\leq 1$ and are convex for $0 \le \alpha \le 3-2\sqrt{2}\approx 0.1715$. We have also $F_{\alpha}(\Delta)=D(\alpha)$, where
\begin{equation}\label{D(alpha)}
  D(\alpha)=\left\{x+iy\in\mathbb{C}: ~ \left(x^2+y^2\right)^2-\frac{x^2}{(1-\alpha)^2}-\frac{y^2}{(1+\alpha)^2}<0, (0\leq \alpha<1)\right\}
\end{equation}
and
\begin{equation}\label{D(1)}
  D(1)=\left\{x+iy\in\mathbb{C}: ~ \left(\forall t\in (-\infty,-i/2]\cup[i/2,\infty)\right)[x+iy\neq it]\right\}.
\end{equation}

It is clear that the curve
\begin{equation*}
  \left(x^2+y^2\right)^2-\frac{x^2}{(1-\alpha)^2}-\frac{y^2}{(1+\alpha)^2}=0\qquad (x, y)\neq(0, 0),
\end{equation*}
is the Booth lemniscate of elliptic type (see Fig. 1, for $\alpha=1/3$). For more details, see \cite{KarEba}.
\begin{figure}[htp]
 \centering
    \includegraphics[width=7cm]{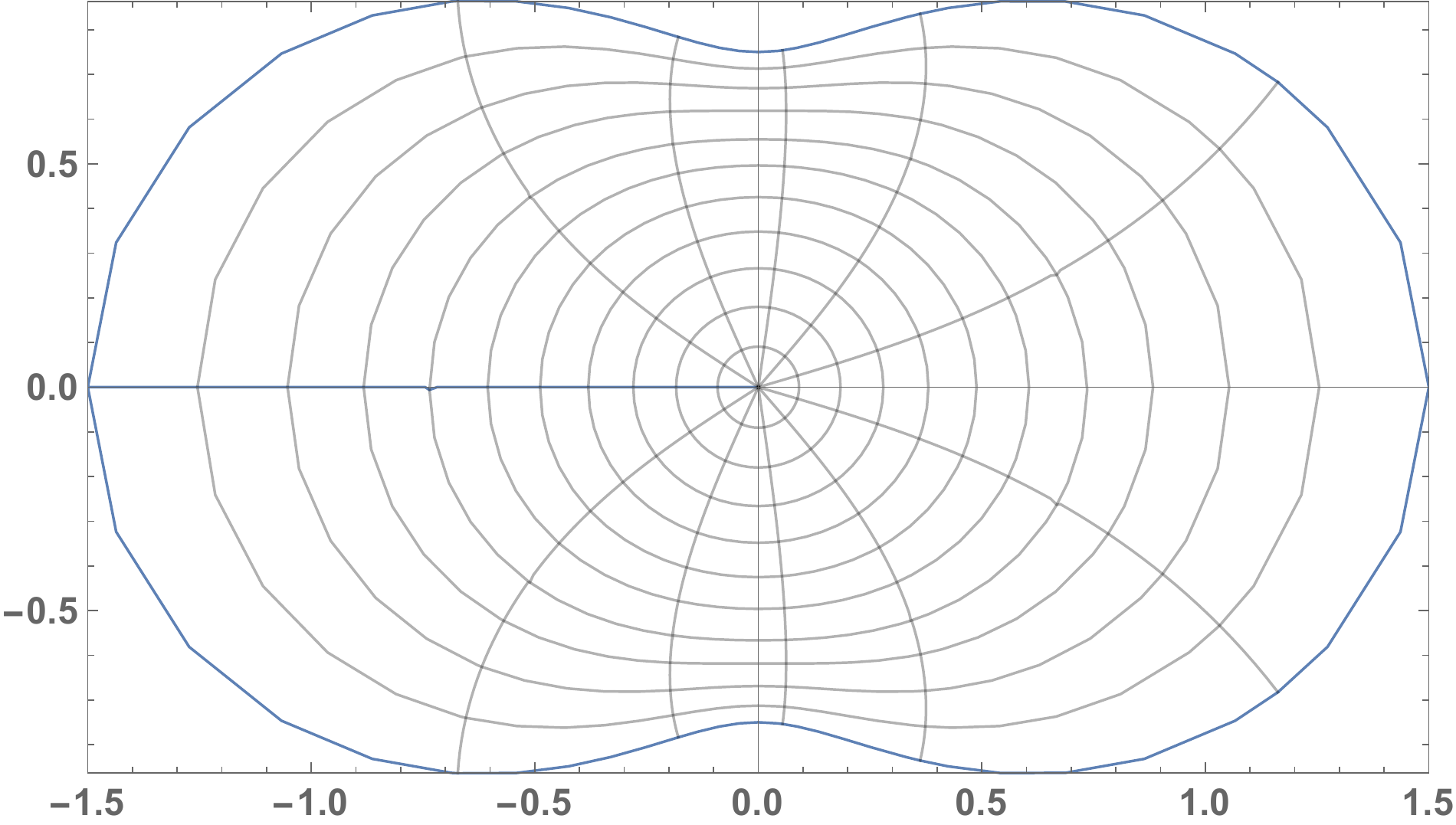}\\
  \caption{$\left(x^2+y^2\right)^2-9x^2/4-9 y^2/16=0$}\label{Fig:1}
\end{figure}
\begin{lemma}\label{LemKarEba}{\rm(}see \cite{KarEba}{\rm)}
Let $F_{\alpha}(z)$ be given by \eqref{Falpha}. Then for $0\leq
\alpha<1$, we have
  \begin{equation}\label{ReFalpha}
    \frac{1}{\alpha-1}< {\rm Re}\left\{F_{\alpha}(z)\right\}<\frac{1}{1-\alpha}\qquad (z\in\Delta).
  \end{equation}
\end{lemma}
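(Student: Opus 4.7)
The plan is to extract \eqref{ReFalpha} directly from the explicit description of the image $F_{\alpha}(\Delta)=D(\alpha)$ recorded just before the lemma. Fix $z\in\Delta$ and write $F_{\alpha}(z)=x+iy$. If $(x,y)=(0,0)$ then $\mathrm{Re}\{F_{\alpha}(z)\}=0$ already lies in the desired interval, so I may assume $(x,y)\neq(0,0)$. In that case, the definition of $D(\alpha)$ in \eqref{D(alpha)} gives the strict inequality $(x^{2}+y^{2})^{2} < \frac{x^{2}}{(1-\alpha)^{2}}+\frac{y^{2}}{(1+\alpha)^{2}}$.

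The next step is to exploit the fact that $(1+\alpha)^{2}\ge(1-\alpha)^{2}$ for $0\le\alpha<1$, which makes $\frac{y^{2}}{(1+\alpha)^{2}}\le\frac{y^{2}}{(1-\alpha)^{2}}$. Substituting into the previous inequality collapses the right-hand side to a single term and yields $(x^{2}+y^{2})^{2} < \frac{x^{2}+y^{2}}{(1-\alpha)^{2}}$. Dividing by $x^{2}+y^{2}>0$ gives $x^{2}+y^{2} < \frac{1}{(1-\alpha)^{2}}$, hence in particular $|x| < \frac{1}{1-\alpha}$, which is exactly \eqref{ReFalpha} after identifying $\frac{1}{\alpha-1}$ with $-\frac{1}{1-\alpha}$.

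There is essentially no obstacle here; the argument is a one-line geometric fact about the elliptic Booth lemniscate $D(\alpha)$, namely that it lies in the vertical strip $|x|<1/(1-\alpha)$. The only point requiring mild care is the edge case $\alpha=0$, where the auxiliary estimate $\frac{y^{2}}{(1+\alpha)^{2}}\le\frac{y^{2}}{(1-\alpha)^{2}}$ becomes an equality; however the strictness of the defining inequality of $D(\alpha)$ survives unchanged through the argument, so the final bounds are still strict.

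If for some reason one prefers not to invoke the image identification $F_{\alpha}(\Delta)=D(\alpha)$, an entirely self-contained alternative is the partial fraction decomposition $F_{\alpha}(z)=\frac{1}{2\sqrt{\alpha}}\bigl((1-\sqrt{\alpha}z)^{-1}-(1+\sqrt{\alpha}z)^{-1}\bigr)$, valid for $0<\alpha<1$: each Möbius factor $(1\mp\sqrt{\alpha}z)^{-1}$ carries $\Delta$ onto the open disk centered at $\frac{1}{1-\alpha}$ of radius $\frac{\sqrt{\alpha}}{1-\alpha}$, so each of their real parts lies in $\bigl(\frac{1-\sqrt{\alpha}}{1-\alpha},\frac{1+\sqrt{\alpha}}{1-\alpha}\bigr)$, and taking the difference yields the same two-sided bound $|\mathrm{Re}\{F_{\alpha}(z)\}|<\frac{1}{1-\alpha}$. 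Sharpness of both sides is visible from $F_{\alpha}(\pm r)\to\pm\frac{1}{1-\alpha}$ as $r\to 1^{-}$.
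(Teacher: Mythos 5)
Your proposal is correct. Note first that the paper itself offers no proof of this lemma: it is imported verbatim from the reference \cite{KarEba}, so there is no in-paper argument to compare against. Your first argument is sound: from the defining inequality of $D(\alpha)$ in \eqref{D(alpha)} and the elementary estimate $(1+\alpha)^{2}\ge(1-\alpha)^{2}$ you correctly collapse the right-hand side to $\frac{x^{2}+y^{2}}{(1-\alpha)^{2}}$, divide by $x^{2}+y^{2}>0$, and obtain $|x|<\frac{1}{1-\alpha}$; the strictness survives because the first inequality in the chain is strict, and you rightly treat the origin separately (a point worth flagging, since as literally written $D(\alpha)$ excludes $(0,0)$ even though $F_{\alpha}(0)=0$, a small imprecision in the paper's description rather than in your argument). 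The only logical dependency is that this route leans on the unproved identification $F_{\alpha}(\Delta)=D(\alpha)$, which the paper also takes from \cite{psok}; your second, partial-fraction argument removes even that dependency. The decomposition $F_{\alpha}(z)=\frac{1}{2\sqrt{\alpha}}\left((1-\sqrt{\alpha}z)^{-1}-(1+\sqrt{\alpha}z)^{-1}\right)$ is correct, each M\"obius factor does map $\Delta$ onto the disc centred at $\frac{1}{1-\alpha}$ of radius $\frac{\sqrt{\alpha}}{1-\alpha}$, and the difference of two quantities ranging over the same open interval of length $\frac{2\sqrt{\alpha}}{1-\alpha}$ gives exactly $|\mathrm{Re}\,F_{\alpha}(z)|<\frac{1}{1-\alpha}$ after rescaling; the degenerate case $\alpha=0$ is trivial since $F_{0}(z)=z$. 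Sharpness via $F_{\alpha}(\pm r)\to\pm\frac{1}{1-\alpha}$ is also correct. Either of your two routes is a complete and self-contained proof.
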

Therefore by definition of subordination and by the Lemma \ref{LemKarEba}, $f\in\mathcal{A}$ belongs to the class
$\mathcal{BS}(\alpha)$, if it satisfies the condition
\begin{equation}\label{starlike}
    \frac{\alpha}{\alpha-1}< {\rm Re}\left\{\frac{zf'(z)}{f(z)}\right\}<\frac{2-\alpha}{1-\alpha}\qquad (z\in\Delta).
\end{equation}
The following lemma will be useful.
\begin{lemma}\label{Rushlemma}
  {\rm(}see \cite{RushStan}{\rm)} Let $F,G\in \mathcal{H}$ be any convex univalent functions in $\Delta$. If $f\prec F$ and $g\prec G$, then
  \begin{equation*}
    f*g\prec F*G\quad {in\ \ \Delta}.
  \end{equation*}
\end{lemma}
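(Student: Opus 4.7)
The plan is to reduce the claim to the P\'olya--Schoenberg theorem, proved by Ruscheweyh and Sheil-Small, which asserts that the Hadamard product of two convex univalent functions is again convex univalent. This is the main nontrivial input I would invoke. Granting it, $F*G$ is convex univalent with $(F*G)(0)=0$, so the claimed subordination reduces to showing that the analytic function $f*g$ sends $\Delta$ into the convex domain $(F*G)(\Delta)$ and vanishes at the origin, the latter being immediate from the power-series definition of the Hadamard product.

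To establish the image inclusion I would use Hallenbeck's integral representation: for $F$ convex univalent and $f\prec F$, there exists a probability measure $\mu$ on the unit circle with $f(z)=\int_{|x|=1}F(xz)\,d\mu(x)$, and analogously $g(z)=\int_{|y|=1}G(yz)\,d\nu(y)$. This is legitimate here because the subordination class $\{h:h\prec F\}$ is itself convex and closed in the topology of local uniform convergence (convexity follows from $F(\Delta)$ being convex, and closedness follows from the open mapping theorem applied to the limit); consequently it coincides with its own closed convex hull, to which Hallenbeck's description applies. The computational core is the identity
\begin{equation*}
 F(xz)*G(yz)=(F*G)(xyz),
\end{equation*}
which follows at once by comparing power-series coefficients. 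Combining this with bilinearity of $*$ and Fubini yields
\begin{equation*}
 (f*g)(z)=\iint_{|x|=|y|=1}(F*G)(xyz)\,d\mu(x)\,d\nu(y).
\end{equation*}

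For each fixed $z\in\Delta$ the integrand is continuous in $(x,y)$ on the compact torus $\{|x|=|y|=1\}$ and its range is a compact subset of the open convex domain $(F*G)(\Delta)$; hence the product-probability average lies in the convex hull of this compact set, which is itself a compact subset of $(F*G)(\Delta)$. This gives $(f*g)(z)\in(F*G)(\Delta)$ for every $z\in\Delta$, and together with $(f*g)(0)=0$ and univalence of $F*G$, this is the desired subordination $f*g\prec F*G$. The principal obstacle is the P\'olya--Schoenberg theorem, which is the deep ingredient one must cite; apart from that, the argument is essentially algebraic, the only delicate point being the verification, sketched above, that Hallenbeck's representation really applies to each individual subordinate function and not merely to elements of the abstract closed convex hull.
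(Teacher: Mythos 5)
The paper offers no proof of this lemma at all---it is quoted with a citation to Ruscheweyh and Stankiewicz---so your argument has to stand on its own, and it does not: the integral representation you attribute to Hallenbeck is false for a general convex univalent $F$. The set $\left\{\int_{|x|=1}F(xz)\,d\mu(x):\mu \text{ a probability measure}\right\}$ is the closed convex hull of the rotations $\{F(xz):|x|=1\}$; this is indeed contained in the subordination class $s(F)=\{h:h\prec F\}$ (which, as you note, is convex and closed), but the inclusion you need is the reverse one, and it fails. Concretely, take $F(z)=z$, which is convex univalent: then $f(z)=z^{2}$ satisfies $f\prec F$, yet $\int_{|x|=1}xz\,d\mu(x)=cz$ with $|c|\le 1$, so $f$ admits no such representation. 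Your justification conflates ``$s(F)$ equals its own closed convex hull'' with ``$s(F)$ equals the closed convex hull of the rotations of $F$''; the first is true, the second is not (it does hold for the half-plane map $\frac{1+z}{1-z}$, where it is exactly the Herglotz formula, but that is a special feature of that domain, not of convexity). Since every subsequent step rests on this representation, the argument collapses; what survives is the correct identity $F(xz)*G(yz)=(F*G)(xyz)$ and hence a proof of the lemma only for those $f,g$ that happen to be averages of rotations of $F$ and $G$.

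The deficiency is not cosmetic: the Ruscheweyh--Stankiewicz theorem genuinely requires more than P\'olya--Schoenberg plus an averaging argument. The published proof goes through the Ruscheweyh--Sheil-Small convolution machinery, in particular the lemma that for $\phi$ convex univalent, $g$ starlike and $H$ analytic the quotient $\bigl(\phi*(Hg)\bigr)/(\phi*g)$ maps $\Delta$ into $\overline{{\rm co}}\,\bigl(H(\Delta)\bigr)$, together with a duality (non-vanishing) characterization of subordination under a convex function that is stable under convolution with convex factors. If you wish to keep your overall architecture, the Hallenbeck step must be replaced by this lemma or an equivalent duality argument; it cannot be repaired by convexity and closedness of $s(F)$ alone.
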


In this work, some geometric properties of the class $\mathcal{BS}(\alpha)$ are investigated.
\setcounter{lemma}{0}
\setcounter{theorem}{0}
\setcounter{equation}{0}
\section{\large Main results}
\noindent
We start with the following lemma that gives the structural formula for the function of the considered class.

\begin{lemma}\label{struct}
The function $f\in\mathcal{A}$ belongs to the class
$\mathcal{BS}(\alpha)$, $0\leq \alpha<1$, if and only if there exists an analytic function $q,$ $q(0)=0$ and $q\prec {F_{\alpha}}$ such that
  \begin{equation}\label{strform}
    f(z) = z \exp \left( \int_0^z \frac{q(t)}{t}dt \right).
  \end{equation}
\end{lemma}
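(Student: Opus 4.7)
The plan is to use the logarithmic derivative trick, which is the standard way to produce such structural formulas.

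For the forward direction, I would start by setting
\[
  q(z) := \frac{zf'(z)}{f(z)} - 1.
\]
Since $f\in\mathcal{A}$, the quotient $zf'(z)/f(z)$ is analytic at $0$ with value $1$, so $q$ is analytic in $\Delta$ with $q(0)=0$; and the hypothesis $f\in\mathcal{BS}(\alpha)$ is exactly the statement $q\prec F_\alpha$. The next step is to rewrite the defining identity as
\[
  \frac{f'(z)}{f(z)} - \frac{1}{z} = \frac{q(z)}{z},
\]
noting that the right-hand side is analytic at $0$ precisely because $q(0)=0$. Integrating from $0$ to $z$ along any path in $\Delta$ yields $\log(f(z)/z) = \int_0^z q(t)/t\,dt$, where the constant of integration is pinned down by the normalization $f(z)/z\to f'(0)=1$ as $z\to 0$, which forces $\log(f(z)/z)$ to vanish at the origin. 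Exponentiating gives \eqref{strform}.

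For the converse, given an analytic $q$ with $q(0)=0$ and $q\prec F_\alpha$, I would define
\[
  f(z) := z\exp\!\left(\int_0^z \frac{q(t)}{t}\,dt\right),
\]
which is analytic on $\Delta$ (again because $q(t)/t$ is analytic at $0$). Differentiating, one gets $f'(z) = (1+q(z))\exp\!\left(\int_0^z q(t)/t\,dt\right)$, so $f(0)=0$, $f'(0)=1$, and
\[
  \frac{zf'(z)}{f(z)} - 1 = q(z) \prec F_\alpha,
\]
proving that $f\in\mathcal{BS}(\alpha)$.

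I expect no serious obstacle here: the argument is essentially a one-to-one correspondence between functions in $\mathcal{BS}(\alpha)$ and their logarithmic derivatives minus one. The only point requiring a moment of care is checking that $q(z)/z$ extends analytically across $z=0$ (so that both the integral and the resulting $f$ are well defined throughout $\Delta$) and that the constant of integration is correctly fixed by the normalization $f\in\mathcal{A}$; both of these follow directly from $q(0)=0$ and $f'(0)=1$.
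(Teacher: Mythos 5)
Your proof is correct and is exactly the standard logarithmic-derivative argument the authors intend: the paper omits the proof entirely, stating only ``The proof is easy.'' The one point worth making explicit is that analyticity of $zf'(z)/f(z)$ on all of $\Delta$ (not merely at the origin) is implicitly guaranteed by the subordination in Definition \ref{defBS}, so $q(z)/z$ is indeed analytic throughout the disk and the integration is legitimate.
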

The proof is easy.
Putting $q={F_{\alpha}}$ in Lemma \ref{struct} we obtain the function
 \begin{equation}\label{fwave}
    \tilde{f}(z) = z\left(\frac{1+z\sqrt{\alpha}}{1-z\sqrt{\alpha}} \right)^{\frac{1}{2\sqrt{\alpha}}},
  \end{equation}
which is extremal function  for several problems in the class $\mathcal{BS}(\alpha)$. Moreover, we consider
 \begin{equation}\label{F}
    F(z):=\frac{\tilde{f}(z)}{z} =\left(\frac{1+z\sqrt{\alpha}}{1-z\sqrt{\alpha}} \right)^{\frac{1}{2\sqrt{\alpha}}}
    =1+z+\frac{1}{2}z^2+\frac{1}{3}\left(\alpha+\frac{1}{2}\right)z^3\cdots.
  \end{equation}

From \eqref{starlike} we conclude that $f \in \mathcal{BS}(\alpha)$ is starlike of order $\frac{\alpha}{\alpha-1} < 0,$ hence $f$ may not be univalent in $\Delta$. It may therefore be interesting to consider a problem to find the radius of starlikeness of order $\gamma$, $\gamma \in [0,1)$ (hence univalence) of the class $\mathcal{BS}(\alpha)$, i.e. the largest radius $r_s(\alpha,\gamma)$ such that each function $f \in \mathcal{BS}(\alpha)$ is starlike of order $\gamma$ in the disc $|z|< r_s(\alpha,\gamma)$. For this purpose we recall the following property of the class $\mathfrak{B}.$

\begin{lemma}{\rm(}Schwarz lemma{\rm)} \label{Schw}{\rm(}see \cite{Dur}{\rm)}
Let $w$ be analytic in the unit disc $\Delta$, with $w(0)=0$ and $|w(z)|<1$ in $\Delta.$ Then $|w'(0)|\le 1$ and $|w(z)|\le |z|$ in $\Delta.$ Strict inequality holds in both estimates unless $w$ is a rotation of the disc: $w(z)=e^{i\theta}z.$
\end{lemma}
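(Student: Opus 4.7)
The plan is to reduce the Schwarz lemma to the maximum modulus principle by dividing out the zero of $w$ at the origin.

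First, I would introduce the auxiliary function $g:\Delta\to\mathbb{C}$ defined by $g(z):=w(z)/z$ for $z\neq 0$ and $g(0):=w'(0)$. Since $w$ is analytic on $\Delta$ with $w(0)=0$, its Taylor expansion starts at the linear term, so the singularity of $w(z)/z$ at the origin is removable and $g$ extends to an analytic function on all of $\Delta$. This is the conceptual heart of the argument; the rest is a routine boundary-to-interior propagation.

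Next, I would fix $r\in(0,1)$ and work on the closed disc $\{|z|\le r\}$. On the boundary circle $|z|=r$ one has $|g(z)|=|w(z)|/r<1/r$, and by the maximum modulus principle applied to $g$ on $\{|z|\le r\}$ this bound propagates to the whole closed disc: $|g(z)|\le 1/r$ for $|z|\le r$. Letting $r\to 1^-$ yields
\begin{equation*}
|g(z)|\le 1\qquad (z\in\Delta),
\end{equation*}
which is exactly the two desired inequalities, namely $|w(z)|\le|z|$ (after multiplying by $|z|$) and $|w'(0)|=|g(0)|\le 1$.

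Finally, for the rigidity statement, I would suppose that equality holds in at least one of the two estimates: either $|w(z_0)|=|z_0|$ for some $z_0\in\Delta\setminus\{0\}$, or $|w'(0)|=1$. In either case $|g|$ attains its supremum $1$ at an interior point of $\Delta$, and the maximum modulus principle then forces $g$ to be a unimodular constant, say $g\equiv e^{i\theta}$ for some $\theta\in\mathbb{R}$, which gives $w(z)=e^{i\theta}z$. Taking the contrapositive delivers the claimed strict inequalities whenever $w$ is not such a rotation. The only real subtlety is the removable-singularity observation that allows $g$ to be treated as a bona fide analytic function on all of $\Delta$; once that is in place I do not foresee any serious obstacle.
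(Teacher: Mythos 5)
Your argument is correct and is precisely the classical proof of the Schwarz lemma (remove the singularity of $w(z)/z$, apply the maximum modulus principle on $|z|\le r$, let $r\to 1^-$, and use the equality case of the maximum modulus principle for the rigidity statement). The paper states this lemma without proof, citing Duren, and the proof given there is essentially the same as yours, so there is nothing to add.
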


\begin{theorem}\label{rs}
Let $\alpha \in  (0,1)$  and $\gamma \in [0,1)$ be given numbers. If $f\in\mathcal{BS}(\alpha)$, then $f$ is starlike of order $\gamma$ in the disc $|z|< r_s(\alpha,\gamma)=\frac{\sqrt{1+4\alpha(1-\gamma)}-1}{2\alpha(1-\gamma)}.$ The result is sharp.
\end{theorem}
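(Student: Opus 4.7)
The plan is to translate the subordination in Definition \ref{defBS} into an explicit representation and then reduce the problem to a one‐variable extremal calculation for $\operatorname{Re}F_\alpha$. Since $f\in\mathcal{BS}(\alpha)$, there exists $w\in\mathfrak{B}$ with
\[
\frac{zf'(z)}{f(z)} = 1 + F_\alpha(w(z)), \qquad z\in\Delta.
\]
Starlikeness of order $\gamma$ in $|z|<r$ is equivalent to $\operatorname{Re}\{F_\alpha(w(z))\}>\gamma-1$ on that disc. By the Schwarz lemma (Lemma \ref{Schw}) we have $|w(z)|\le|z|<r$, so it suffices to show that $\operatorname{Re}F_\alpha(\zeta)\ge\gamma-1$ for every $\zeta$ in the closed disc $\{|\zeta|\le r\}$, with $r$ to be determined as large as possible.

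The core step is an explicit minimisation of $\operatorname{Re}F_\alpha$ over such discs. Writing $\zeta=re^{i\theta}$ and rationalising, one obtains
\[
\operatorname{Re}F_\alpha(re^{i\theta}) \;=\; \frac{r(1-\alpha r^2)\cos\theta}{(1-\alpha r^2)^{2}+4\alpha r^{2}\sin^{2}\theta}.
\]
Regarded as a function of $c=\cos\theta$, this expression has the form $Ac/(B-Dc^{2})$ with $A,B>0$ and $B-Dc^{2}>0$; its derivative in $c$ is positive, so it is monotone increasing in $\cos\theta$. Hence on the circle $|\zeta|=r$ the minimum occurs at $\theta=\pi$, giving the value $-r/(1-\alpha r^{2})$ at $\zeta=-r$. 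Because $\operatorname{Re}F_\alpha$ is harmonic on $|\zeta|<1/\sqrt{\alpha}\supset\overline{\Delta}$, the minimum on the closed disc $|\zeta|\le r$ coincides with this boundary minimum.

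Imposing $-r/(1-\alpha r^{2})\ge\gamma-1$ and clearing denominators produces an algebraic inequality quadratic in $r$ whose admissible positive root yields $r_s(\alpha,\gamma)$. Sharpness is then read off from the extremal function $\tilde f$ of \eqref{fwave}, which corresponds in Lemma \ref{struct} to the choice $q=F_\alpha$ (equivalently $w(z)=z$): at $z=-r_s(\alpha,\gamma)$ on the negative real axis, $z\tilde f'(z)/\tilde f(z)=1+F_\alpha(-r_s)=1-r_s/(1-\alpha r_s^{2})$, which by construction equals $\gamma$, so $r_s$ cannot be enlarged.

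The main obstacle is the closed-form minimisation of $\operatorname{Re}F_\alpha$ on circles of radius $r<1$: once the monotonicity in $\cos\theta$ displayed above is verified, the remainder of the proof is a routine quadratic-equation manipulation together with a direct evaluation at the extremal point. A small preliminary check that $1-\alpha r^{2}>0$ throughout $0<r<1$, $0\le\alpha<1$, ensures all denominators remain positive and the inequality reversals are valid.
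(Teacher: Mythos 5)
Your proposal is correct and takes essentially the same route as the paper: both reduce, via the Schwarz lemma, to the lower bound $\operatorname{Re}\{zf'(z)/f(z)\}\ge 1-\frac{r}{1-\alpha r^2}$ on $|z|\le r$ (the paper by a quick modulus estimate, you by an explicit minimisation of $\operatorname{Re}F_\alpha$ on circles plus the harmonic minimum principle, which is a harmless and in fact slightly sharper refinement), and both then solve $1-\frac{r}{1-\alpha r^2}=\gamma$ and obtain sharpness from $\tilde f$ at $z=-r_s$. One caveat applying equally to your write-up and to the paper's (since neither actually carries out the quadratic): the positive root of $\alpha(1-\gamma)r^2+r-(1-\gamma)=0$ is $\frac{\sqrt{1+4\alpha(1-\gamma)^2}-1}{2\alpha(1-\gamma)}$, not the $\frac{\sqrt{1+4\alpha(1-\gamma)}-1}{2\alpha(1-\gamma)}$ displayed in the theorem (the two agree only at $\gamma=0$), so you should finish the algebra explicitly rather than assert that the root ``yields $r_s(\alpha,\gamma)$.''
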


\begin{proof}
Let $f\in\mathcal{BS}(\alpha),$  $\alpha \in  (0,1).$ Then through \eqref{defsob} we have  $\left(\frac{zf'(z)}{f(z)}-1\right)\prec \frac{z}{1-\alpha z^2}$ so there exists $w\in\mathfrak{B}$ such that
\begin{equation*}
{\rm Re} \left\{\frac{zf'(z)}{f(z)}\right\}={\rm Re}\left\{\frac{1+w(z)-\alpha w^2(z)}{1-\alpha w^2(z)}\right\}
\end{equation*}
for all $z\in\Delta$. Applying the Schwarz lemma we obtain
 \begin{align*}
{\rm Re} \left\{\frac{zf'(z)}{f(z)}\right\}&={\rm Re}\left\{ 1+\frac{w(z)}{1-\alpha w^2(z)}\right\}=1+{\rm Re}\left\{\frac{w(z)}{1-\alpha w^2(z)}\right\}\\
&\ge 1-\frac{|w(z)|}{1-\alpha|w(z)|^2}\ge 1-\frac{|z|}{1-\alpha|z|^2}=1-\frac{r}{1-\alpha r^2},
\end{align*}
where $r=|z|<1$. Let us consider a function $h(r)=1-\frac{r}{1-\alpha r^2},$ $r \in (0,1).$ Note that $h'(r)=-\frac{1+\alpha r^2}{(1-\alpha r^2)^2}<0$ for all $r \in [0,1)$  hence $h$ is a strictly decreasing function and it decreases from $1$ to $\frac{\alpha}{\alpha -1}<0.$ Therefore the equation $h(r)=\gamma$ has for given $\alpha$ and $\gamma$ the smallest positive root $r_s(\alpha,\gamma)$ in $(0,1)$. Therefore $f$ is starlike of order $\gamma$ in $|z|<r\le r_s(\alpha,\gamma)$. Note that for the function $ \tilde{f}$ given in \eqref{fwave} we obtain
\begin{equation*}
   {\rm Re} \frac{z\tilde{f}'(z)}{\tilde{f}(z)}={\rm Re}\left\{ 1+\frac{z}{1-\alpha z^2}\right\}=:A(z)
  \end{equation*}
  and $A(-r_s(\alpha,\gamma))=\gamma$.
\end{proof}

Putting $\gamma=0$ in Theorem \ref{rs}, we obtain.
\begin{corollary}
Let $\alpha \in  (0,1)$. If $f\in\mathcal{BS}(\alpha)$ then $f$ is starlike univalent in the disc $|z|< r_s(\alpha)=\frac{\sqrt{1+4\alpha}-1}{2\alpha}.$ The result is sharp.
\end{corollary}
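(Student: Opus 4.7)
The plan is to unpack the subordination in \eqref{defsob} into a Schwarz function and then translate the Schwarz lemma into a lower bound on ${\rm Re}\{zf'(z)/f(z)\}$. Specifically, $f\in\mathcal{BS}(\alpha)$ produces some $w\in\mathfrak{B}$ satisfying
\[
\frac{zf'(z)}{f(z)}=1+\frac{w(z)}{1-\alpha\,w(z)^{2}}\qquad(z\in\Delta),
\]
and Lemma \ref{Schw} yields $|w(z)|\le|z|=:r$ throughout $\Delta$.

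From here the key estimate is obtained from the triangle inequality together with $\alpha r^{2}<1$: since $|1-\alpha w(z)^{2}|\ge 1-\alpha|w(z)|^{2}$ and the map $t\mapsto t/(1-\alpha t^{2})$ is strictly increasing on $[0,1)$, one gets
\[
{\rm Re}\left\{\frac{zf'(z)}{f(z)}\right\}\ \ge\ 1-\frac{|w(z)|}{1-\alpha|w(z)|^{2}}\ \ge\ 1-\frac{r}{1-\alpha r^{2}}=:h(r).
\]
A routine derivative computation shows that $h$ is strictly decreasing on $[0,1)$ from $h(0)=1$ down to the boundary value $\alpha/(\alpha-1)<0$, so $h(r)=\gamma$ has a unique root in $(0,1)$; solving the resulting quadratic $\alpha(1-\gamma)r^{2}+r-(1-\gamma)=0$ yields the claimed expression for $r_{s}(\alpha,\gamma)$, which gives starlikeness of order $\gamma$ on $|z|<r_{s}(\alpha,\gamma)$.

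For sharpness, the natural candidate is the extremal function $\tilde f$ from \eqref{fwave}, which by Lemma \ref{struct} corresponds to $q=F_{\alpha}$ and therefore to the Schwarz function $w(z)=z$. A direct logarithmic derivative gives $z\tilde f'(z)/\tilde f(z)=1+z/(1-\alpha z^{2})$, and substituting $z=-r_{s}(\alpha,\gamma)$ reproduces $h(r_{s})=\gamma$, so equality holds through the entire chain of inequalities and the radius cannot be enlarged.

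The only delicate point is confirming that the triangle-inequality bound is genuinely attained; this is automatic because $\tilde f$ simultaneously realises the Schwarz-lemma equality case and the worst phase alignment (negative real $z$), so the whole argument is essentially a computation once these two ingredients are identified.
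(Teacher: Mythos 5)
Your proposal is correct and follows essentially the same route as the paper: the corollary is the $\gamma=0$ case of Theorem \ref{rs}, and your argument (Schwarz function from the subordination, Schwarz lemma, the triangle-inequality bound leading to $h(r)=1-\tfrac{r}{1-\alpha r^2}$, monotonicity of $h$, and sharpness via $\tilde f$ at $z=-r_s$) is exactly the paper's proof of that theorem specialized to $\gamma=0$. One incidental remark: solving your (correct) quadratic $\alpha(1-\gamma)r^2+r-(1-\gamma)=0$ actually gives $\bigl(\sqrt{1+4\alpha(1-\gamma)^2}-1\bigr)/\bigl(2\alpha(1-\gamma)\bigr)$, which agrees with the stated $r_s(\alpha)$ only at $\gamma=0$ — so your derivation is fine for the corollary, but the general formula printed in Theorem \ref{rs} appears to contain a typo.
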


\begin{remark}
Note that $\lim_{\alpha \longrightarrow 0^+} r_s(\alpha)= \lim_{\alpha \longrightarrow 0^+} \frac{2}{\sqrt{1+4\alpha}+1}=1. $
Moreover, it is worth mentioning that $\lim_{\alpha \longrightarrow 1^-} r_s(\alpha)=\frac{\sqrt{5}-1}{2}= 0,618\dots$, i.e. this limit is a reciprocal of the  golden ratio $\frac{\sqrt{5}+1}{2}.$
\end{remark}

Now we consider the following question:\\ For a given number $r \in (0,1]$ find $\alpha(r)$ such that for each function $f\in\mathcal{BS}(\alpha(r))$ the image $f(\{z:|z|<r\})$ is a starlike domain.

\begin{theorem}\label{alpha_r}
Let $r \in  (0,1]$  be the given number. If  $0\le \alpha < \frac{1-r}{r^2}$, then each function $f\in\mathcal{BS}(\alpha)$ maps a disc $|z|<r$ onto a starlike domain.
\end{theorem}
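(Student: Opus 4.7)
The plan is to deduce the theorem as essentially a reparametrization of the radius-of-starlikeness computation already performed in Theorem \ref{rs} with $\gamma=0$. One checks algebraically that the strict inequality $\alpha<(1-r)/r^2$ is equivalent to $r+\alpha r^2<1$, which in turn rearranges to $r<\frac{\sqrt{1+4\alpha}-1}{2\alpha}=r_s(\alpha)$. So conceptually the claim is the same statement, read with $\alpha$ playing the role of the unknown rather than $r$. The task is merely to translate this into the conclusion that the image of the open disc is starlike.

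The first step is to recycle the pointwise lower bound derived inside the proof of Theorem \ref{rs}: for every $f\in\mathcal{BS}(\alpha)$ there is $w\in\mathfrak{B}$ with $zf'(z)/f(z)=1+w(z)/(1-\alpha w^2(z))$, and the Schwarz lemma gives
\[
\operatorname{Re}\frac{zf'(z)}{f(z)}\ \ge\ 1-\frac{|z|}{1-\alpha|z|^{2}}\qquad(z\in\Delta).
\]
Since the right-hand side is a strictly decreasing function of $|z|$, on the disc $|z|<r$ it exceeds $1-r/(1-\alpha r^{2})$. I would then unpack the hypothesis $\alpha<(1-r)/r^{2}$ as $r+\alpha r^{2}<1$, which simultaneously delivers $1-\alpha r^{2}>r>0$ (so the denominator is positive) and $1-r/(1-\alpha r^{2})>0$. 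Hence $\operatorname{Re}(zf'(z)/f(z))>0$ throughout $\{|z|<r\}$.

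Finally, the classical characterization of starlike images (applicable here because $f$ is analytic on a neighbourhood of $\overline{D(0,r)}\subset\Delta$ and normalized by $f(0)=0,\ f'(0)=1$, and easily reduced to the standard unit-disc statement by considering $f_\rho(z):=f(\rho z)/\rho$ for each $\rho<r$) converts the above positivity of $\operatorname{Re}(zf'(z)/f(z))$ into the conclusion that $f(\{|z|<r\})$ is a domain starlike with respect to the origin. The main ``obstacle'' is only bookkeeping: no new analytic input is needed beyond the Schwarz-lemma estimate already present in Theorem \ref{rs}, and no sharpness argument is required since the statement is not asserted to be sharp.
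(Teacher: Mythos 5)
Your proposal is correct and follows essentially the same route as the paper: both reuse the Schwarz-lemma estimate $\operatorname{Re}\{zf'(z)/f(z)\}\ge 1-|z|/(1-\alpha|z|^2)$ from Theorem \ref{rs} and observe that the hypothesis $0\le\alpha<(1-r)/r^2$ makes the bound $1-r/(1-\alpha r^2)$ positive on $|z|<r$. Your extra remarks (the equivalence with $r<r_s(\alpha)$ and the dilation $f_\rho(z)=f(\rho z)/\rho$ justifying the starlikeness criterion on a subdisc) only make explicit what the paper leaves implicit.
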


\begin{proof}
After using the same argument as in the proof of Theorem \ref{rs} we conclude that  $f \in \mathcal{BS}(\alpha)$ satisfies the equality
\begin{equation*}
{\rm Re} \left\{\frac{zf'(z)}{f(z)}\right\}={\rm Re}\left\{\frac{1+w(z)-\alpha w^2(z)}{1-\alpha w^2(z)}\right\}
\end{equation*}
for all $z \in \Delta$ with some $w\in\mathfrak{B}$. Then we have by Schwarz's lemma that
\begin{equation*}
   {\rm Re} \left\{\frac{zf'(z)}{f(z)}\right\}\ge 1-\frac{|w(z)|}{1-\alpha|w(z)|^2}\ge 1-\frac{|z|}{1-\alpha|z|^2}.
  \end{equation*}
Consequently, for $|z|<r$, we obtain ${\rm Re} \left\{\frac{zf'(z)}{f(z)}\right\}>1-\frac{r}{1-\alpha r^2}.$ Let us note that a function $g(\alpha)=1-\frac{r}{1-\alpha r^2},$ $\alpha \in [0,1),$ has positive values  for $0\le \alpha < \frac{1-r}{r^2}$. Therefore the image of the disc  $|z|<r$ is a starlike domain.
\end{proof}

\begin{theorem}
Let $ n\ge 2$ be integer. If one of the following conditions holds

\begin{tabular}{llllllllllll}
$(i)$ & $\frac{1}{\alpha+n(1-\alpha)}< |c|<1,$\\
$(ii)$ & $n > \frac{3-\alpha}{1-\alpha}$\quad and \quad $\frac{1}{\alpha-2+n(1-\alpha)}< |c|<1,$\\
$(iii)$ & $n\ge\frac{2-\alpha}{1-\alpha}$\quad and \quad $|c|>1,$\\
$(iv)$ & $n<\frac{2-\alpha}{1-\alpha}$\quad and \quad $1<|c|<\frac{1}{2-\alpha+n(\alpha-1)},$\\
\end{tabular}\\
then the function $g_n(z)=z+cz^n$ does not belong to the class $\mathcal{BS}(\alpha)$.

\end{theorem}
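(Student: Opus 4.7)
The plan is to exploit the necessary strip condition \eqref{starlike} derived from Lemma~\ref{LemKarEba}: every $f\in\mathcal{BS}(\alpha)$ must satisfy $\alpha/(\alpha-1)<\mathrm{Re}\{zf'(z)/f(z)\}<(2-\alpha)/(1-\alpha)$ throughout $\Delta$. So, to prove $g_n\notin\mathcal{BS}(\alpha)$, it will suffice in each case to exhibit a single $z\in\Delta$ at which $\mathrm{Re}\{zg_n'(z)/g_n(z)\}$ leaves this strip. For $g_n(z)=z+cz^n$ one computes
\[
\frac{zg_n'(z)}{g_n(z)}=\frac{1+ncz^{n-1}}{1+cz^{n-1}},
\]
and writing $c=|c|e^{i\varphi}$ and $z=re^{i\theta}$, the quantity $cz^{n-1}=|c|r^{n-1}e^{i(\varphi+(n-1)\theta)}$ can be steered to either of the real values $\pm|c|r^{n-1}$ by choosing $\theta$ suitably. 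On such a ray the quotient above is real, so the problem reduces to inspecting a one-variable function.

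In cases (i) and (ii), where $|c|<1$, I would let $r\to 1^-$ with $cz^{n-1}=\mp|c|r^{n-1}$, giving limiting values $(1\mp n|c|)/(1\mp|c|)$. A direct manipulation shows
\[
\frac{1-n|c|}{1-|c|}<\frac{\alpha}{\alpha-1}\iff|c|\bigl(\alpha+n(1-\alpha)\bigr)>1,
\]
which is exactly the hypothesis in (i) and forces violation of the lower strip bound. Likewise,
\[
\frac{1+n|c|}{1+|c|}>\frac{2-\alpha}{1-\alpha}\iff|c|\bigl(n(1-\alpha)-(2-\alpha)\bigr)>1,
\]
and this is compatible with $|c|<1$ precisely when the coefficient of $|c|$ on the right exceeds $1$, i.e.\ $n>(3-\alpha)/(1-\alpha)$; together with the lower bound on $|c|$ this reproduces (ii) and violates the upper strip bound.

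In cases (iii) and (iv), where $|c|>1$, I would pick $r$ close enough to $1$ that $|c|r^{n-1}>1$ and again take the ray with $cz^{n-1}=-|c|r^{n-1}$. Letting $r\to 1^-$, the real part tends to $(n|c|-1)/(|c|-1)$, and
\[
\frac{n|c|-1}{|c|-1}>\frac{2-\alpha}{1-\alpha}\iff|c|\bigl((2-\alpha)-n(1-\alpha)\bigr)<1.
\]
The bracketed quantity is non-positive exactly when $n\ge(2-\alpha)/(1-\alpha)$, giving automatic satisfaction and recovering (iii); otherwise the bracket is positive and the inequality rearranges to $|c|<1/\bigl((2-\alpha)-n(1-\alpha)\bigr)$, which is (iv). In either sub-case the upper strip bound is violated, so $g_n\notin\mathcal{BS}(\alpha)$.

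I do not expect any conceptual obstacle. The main care required will be to organise the sign change at $|cz^{n-1}|=1$ correctly in (iii)--(iv) and to verify that each algebraic reduction lands precisely on the thresholds stated; once the ratio $(1+ncz^{n-1})/(1+cz^{n-1})$ is restricted to a chosen real radius the analysis is an elementary one-variable exercise.
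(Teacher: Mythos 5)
Your proposal is correct and is essentially the paper's own argument in shifted form: your limiting values $(1\mp n|c|)/(1\mp|c|)$ are exactly the endpoints $x_1=\frac{|c|(n-1)}{|c|-1}$, $x_2=\frac{|c|(n-1)}{|c|+1}$ of the real diameter of $G(\Delta)$ used in the paper (translated by $1$), and the strip bounds $\frac{\alpha}{\alpha-1}$, $\frac{2-\alpha}{1-\alpha}$ are the translated extreme real parts $\frac{1}{\alpha-1}$, $\frac{1}{1-\alpha}$ of $D(\alpha)$, so the four case-by-case inequalities coincide. If anything, your version is slightly more careful for $|c|>1$, where the paper's description of $G(\Delta)$ as a disc is imprecise since $g_n$ then has a zero in the punctured disc.
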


\begin{proof}
Let us put $G(z)=\frac{zg'_n(z)}{g_n(z)}-1=\frac{1+cnz^{n-1}}{1+cz^{n-1}}-1.$ To prove our assertion it suffices to show that the  function $G$ is not subordinate to $F_{\alpha}$ or equivalently, because of the univalence of the dominant function $F_{\alpha}$, that the set $G(\Delta)$ is not included in $F_{\alpha}(\Delta)=D(\alpha).$ Upon performing simple calculation we find that the set $G(\Delta)$ is the disc with the diameter from the point $x_1=\frac{|c|(n-1)}{|c|-1}$ to the point $x_2=\frac{|c|(n-1)}{|c|+1}.$ The set $D(\alpha)$ is bounded by the curve
\begin{equation*}
  \left(x^2+y^2 \right)^2-\frac{x^2}{(1-\alpha)^2}-\frac{y^2}{(1+\alpha)^2}=0,\quad (x,y)\neq (0,0).
\end{equation*}
We have $\min_{|z|=1}{\rm Re}\{F_{\alpha}(z)\}=F_{\alpha}(-1)=\frac{1}{\alpha-1}$ and $\max_{|z|=1}{\rm Re}\{F_{\alpha}(z)\}=F_{\alpha}(1)=\frac{1}{1-\alpha}.$
If one of the conditions $(i)-(iv)$ is satisfied then $\min\{x_1,x_2\}<\frac{1}{\alpha-1}$ or $\max\{x_1,x_2\}>\frac{1}{1-\alpha},$ and then $G(\Delta)$ is not included in $D(\alpha).$ The proof of theorem is completed.
\end{proof}
Recently, one of the interesting problems for mathematician is to find bounds for ${\rm Re}\{f(z)/z\}$ (see \cite{kessiberian, sim2013}).
In the sequel, we obtain lower and upper bounds for ${\rm Re}\{f(z)/z\}$. We first get the following result for the function $F(z)$ given by \eqref{F}.

\begin{theorem}\label{F(z)-1}
  The function $F(z)-1$ is convex univalent in $\Delta$.
\end{theorem}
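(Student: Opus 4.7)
The plan is to verify the standard analytic criterion for convexity, namely that $\mathrm{Re}\{1 + z(F-1)''(z)/(F-1)'(z)\} > 0$ in $\Delta$; since $(F-1)' = F'$ and $(F-1)'' = F''$, this reduces to showing $\mathrm{Re}\{1+zF''(z)/F'(z)\}>0$.

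First I would compute $F'/F$ by logarithmic differentiation of \eqref{F}: a short calculation gives
\begin{equation*}
\frac{F'(z)}{F(z)}=\frac{1}{1-\alpha z^{2}},
\end{equation*}
which is of course no surprise since this expression is what makes $\tilde f(z)=zF(z)$ extremal for $\mathcal{BS}(\alpha)$. Differentiating the relation $F'(z)=F(z)/(1-\alpha z^{2})$ once more and dividing by $F'$ yields
\begin{equation*}
\frac{F''(z)}{F'(z)}=\frac{1+2\alpha z}{1-\alpha z^{2}},
\end{equation*}
and then
\begin{equation*}
1+\frac{zF''(z)}{F'(z)}=\frac{1+z+\alpha z^{2}}{1-\alpha z^{2}}.
\end{equation*}

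The main task is then to show that the rational function on the right has positive real part on $\Delta$. The natural route is to multiply numerator and denominator by $\overline{1-\alpha z^{2}}$; the cross terms $\alpha z^{2}-\alpha\bar z^{2}$ are purely imaginary, and using $z\bar z^{2}=|z|^{2}\bar z$ the real part of the numerator simplifies to
\begin{equation*}
1+\mathrm{Re}(z)\bigl(1-\alpha|z|^{2}\bigr)-\alpha^{2}|z|^{4}.
\end{equation*}
Since $1-\alpha|z|^{2}>0$ on $\Delta$, at fixed $r=|z|<1$ this expression is minimized when $\mathrm{Re}(z)=-r$, giving
\begin{equation*}
1-r+\alpha r^{3}-\alpha^{2}r^{4}=(1-r)+\alpha r^{3}(1-\alpha r),
\end{equation*}
which is strictly positive for $0\le r<1$ and $0\le \alpha<1$ because both summands are nonnegative and the first is strictly positive. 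Together with $|1-\alpha z^{2}|\ge 1-\alpha>0$, this proves positivity of the real part, hence convexity of $F-1$; univalence is then automatic.

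The only slightly delicate point is the factorization $(1-r)+\alpha r^{3}(1-\alpha r)$, which makes the nonnegativity transparent without case analysis on $\alpha$. Everything else is routine differentiation. I expect no substantive obstacle beyond this algebraic verification.
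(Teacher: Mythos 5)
Your proof is correct, but it takes a genuinely different route from the paper's. Both arguments reduce the claim to showing $\mathrm{Re}\left\{1+\frac{zp''(z)}{p'(z)}\right\}>0$ for $p=F-1$, and both start from the logarithmic derivative $\frac{F'(z)}{F(z)}=\frac{1}{1-\alpha z^2}$. The paper then splits $1+\frac{zp''(z)}{p'(z)}$ into the two pieces $1+\left(\frac{1}{2\sqrt{\alpha}}-1\right)\frac{2\sqrt{\alpha}z}{1-\alpha z^2}+\frac{2\sqrt{\alpha}z}{1-\sqrt{\alpha}z}$ and estimates the real part of each piece separately via Lemma \ref{LemKarEba}, arriving at the lower bound $K(\alpha)=\frac{\alpha}{1-\alpha}\geq 0$. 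You instead collapse everything into the single rational function $\frac{1+z+\alpha z^2}{1-\alpha z^2}$ and verify positivity of its real part directly by multiplying through by $1-\alpha\bar z^2$; your identity $\mathrm{Re}\left\{(1+z+\alpha z^2)(1-\alpha \bar z^2)\right\}=1+\mathrm{Re}(z)\left(1-\alpha|z|^2\right)-\alpha^2|z|^4$ and the factorization $(1-r)+\alpha r^3(1-\alpha r)$ both check out. Your route is more elementary and in fact more robust: the paper's term-by-term estimate multiplies the lower bound $\mathrm{Re}\,F_\alpha(z)>\frac{1}{\alpha-1}$ by the coefficient $1-2\sqrt{\alpha}$, which is negative for $\alpha>1/4$, so the displayed inequality chain there reverses direction in that range, whereas your single-fraction computation gives strict positivity uniformly for all $0\leq\alpha<1$ and $|z|<1$ with no case analysis. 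The only cost is the short algebraic expansion, which you carry out correctly.
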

\begin{proof}
  Let us define
\begin{equation}\label{P(z)}
  p(z):=F(z)-1=\left(\frac{1+z\sqrt{\alpha}}{1-z\sqrt{\alpha}} \right)^{\frac{1}{2\sqrt{\alpha}}}-1=z+\frac{1}{2}z^2+\frac{1}{3}\left(\alpha+\frac{1}{2}\right)z^3\cdots.
\end{equation}
Then we see that $p(z)\in \mathcal{A}$. A simple calculation gives us
\begin{equation}\label{convex}
  1+\frac{zp''(z)}{p'(z)}=1+\left(\frac{1}{2\sqrt{\alpha}}-1\right)\left(\frac{2\sqrt{\alpha}z}{1-\alpha z^2}\right)+\frac{2\sqrt{\alpha}z}{1-\sqrt{\alpha}z}.
\end{equation}
It is sufficient to show that \eqref{convex} has positive real part in the unit disc. From Lemma \ref{LemKarEba} we obtain
\begin{eqnarray*}
  {\rm Re}\left\{1+\frac{zp''(z)}{p'(z)}\right\} &=& {\rm Re}\left\{1+\left(\frac{1}{2\sqrt{\alpha}}-1\right)\left(\frac{2\sqrt{\alpha}z}{1-\alpha z^2}\right)+\frac{2\sqrt{\alpha}z}{1-\sqrt{\alpha}z}\right\}\\
  &=&1+2\sqrt{\alpha}\left(\frac{1}{2\sqrt{\alpha}}-1\right){\rm Re}\left\{\frac{z}{1-\alpha z^2}\right\}+2\sqrt{\alpha}{\rm Re}\left\{\frac{z}{1-\sqrt{\alpha}z}\right\}\\
   &>& 1+\left(1-2\sqrt{\alpha}\right)\left(\frac{1}{\alpha-1}\right)
   -\frac{2\sqrt{\alpha}}{1+\sqrt{\alpha}}=:K(\alpha)\qquad (0\leq \alpha<1).
\end{eqnarray*}
It is easily seen that $K'(\alpha)=\frac{1}{(\alpha-1)^2}>0$. Thus $ K(\alpha)\geq K(0)=0$, and hence $F(z)-1$ is convex univalent function.
\end{proof}

In the proof of the next theorem we will use the following result concerning the convexity of the boundary of $D(\alpha)$.
\begin{lemma}\label{Dconv}{\rm(}see \cite{psok}{\rm)}
 Suppose that $F_\alpha$ is given by \eqref{Falpha}. If $0 \le \alpha \le 3-2\sqrt{2}\approx 0.1715$, then the curve $F_\alpha(e^{i\varphi})$, $\varphi \in [0,2\pi)$, is convex.  If $\alpha \in (3-2\sqrt{2},1)$, then the curve  $F_\alpha(e^{i\varphi})$, $\varphi \in [0,2\pi)$, is concave. Moreover, in both cases this curve is symmetric with respect to both axes.
\end{lemma}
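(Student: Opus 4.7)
The plan is to use the classical criterion that the image $F_\alpha(e^{i\varphi})$ traces a convex curve at the boundary point $e^{i\varphi}$ if and only if
\[
\mathrm{Re}\!\left(1+\frac{zF_\alpha''(z)}{F_\alpha'(z)}\right)\ge 0 \qquad \text{at } z=e^{i\varphi},
\]
and is concave (i.e.\ has negative curvature) when this quantity is negative. This follows by differentiating $\arg\!\bigl(ie^{i\varphi}F_\alpha'(e^{i\varphi})\bigr)$, the argument of the tangent vector, with respect to $\varphi$.

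First I would compute the derivatives of $F_\alpha(z)=z/(1-\alpha z^{2})$ directly, obtaining
\[
F_\alpha'(z)=\frac{1+\alpha z^{2}}{(1-\alpha z^{2})^{2}},\qquad
F_\alpha''(z)=\frac{2\alpha z(3+\alpha z^{2})}{(1-\alpha z^{2})^{3}},
\]
so that
\[
1+\frac{zF_\alpha''(z)}{F_\alpha'(z)}=1+\frac{2\alpha z^{2}(3+\alpha z^{2})}{1-\alpha^{2}z^{4}}.
\]
On the circle $|z|=1$, setting $z=e^{i\varphi}$ and multiplying numerator and denominator by $\overline{1-\alpha^{2}z^{4}}$, after routine simplification using $z\bar z=1$, the real part reduces to
\[
\mathrm{Re}\!\left(1+\frac{zF_\alpha''(z)}{F_\alpha'(z)}\right)\bigg|_{z=e^{i\varphi}}
= \frac{(1-\alpha^{2})\bigl[(1+\alpha^{2})+6\alpha\cos(2\varphi)\bigr]}{\bigl|1-\alpha^{2}e^{2i\varphi}\bigr|^{2}}.
\]

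Since $0\le\alpha<1$, the denominator and the factor $(1-\alpha^{2})$ are strictly positive, so the sign of the expression coincides with that of $g(\varphi):=(1+\alpha^{2})+6\alpha\cos(2\varphi)$. The minimum $\min_\varphi g(\varphi)=\alpha^{2}-6\alpha+1$ is attained at $\varphi\in\{\pi/2,3\pi/2\}$, and the root of $\alpha^{2}-6\alpha+1=0$ in $[0,1)$ is exactly $\alpha=3-2\sqrt{2}$. Hence $g\ge 0$ on $[0,2\pi)$ iff $0\le\alpha\le 3-2\sqrt{2}$, giving convexity in that range; for $\alpha\in(3-2\sqrt{2},1)$ the expression becomes negative near $\varphi=\pi/2$, so the curve is concave there.

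For the symmetry statement, I would observe that $F_\alpha$ has real Taylor coefficients, so $F_\alpha(\bar z)=\overline{F_\alpha(z)}$, which yields symmetry of the curve with respect to the real axis; and $F_\alpha$ is an odd function, so $F_\alpha(-z)=-F_\alpha(z)$, which gives symmetry with respect to the origin. Composing the two reflections produces symmetry with respect to the imaginary axis as well. The only genuinely computational step is the algebraic reduction that produces the clean factor $(1-\alpha^{2})\bigl[(1+\alpha^{2})+6\alpha\cos(2\varphi)\bigr]$; this is the point where one has to be careful, but once it is performed the sharp threshold $3-2\sqrt{2}$ appears immediately from solving a quadratic.
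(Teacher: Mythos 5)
The paper does not prove this lemma at all --- it is quoted verbatim from the cited reference of Piejko and Sok\'{o}{\l} --- so there is no in-paper argument to compare against; your proof is a correct, self-contained derivation by the standard route (the boundary convexity criterion $\mathrm{Re}\{1+zF_\alpha''(z)/F_\alpha'(z)\}\ge 0$ on $|z|=1$). Your derivatives, the reduction of the real part to $(1-\alpha^2)\left[(1+\alpha^2)+6\alpha\cos(2\varphi)\right]$ divided by a positive quantity, the identification of the threshold from $\alpha^2-6\alpha+1=0$, and the symmetry argument via real coefficients and oddness are all correct; the only blemish is a typo in the denominator, which should read $\left|1-\alpha^{2}e^{4i\varphi}\right|^{2}$ rather than $\left|1-\alpha^{2}e^{2i\varphi}\right|^{2}$, and this does not affect the sign analysis.
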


\begin{theorem}\label{f/z sub}
  If a function $f$ belongs to the class $ \mathcal{BS}(\alpha)$, $0 \le \alpha \le 3-2\sqrt{2}$, then
  \begin{equation}\label{sub}
   \frac{f(z)}{z}\prec F(z)\qquad (z\in\Delta),
  \end{equation}
  where $F(z)$ is given by \eqref{F}.
\end{theorem}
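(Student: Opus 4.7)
The plan is to express both $f(z)/z$ and $F(z)$ as exponentials of Hadamard convolutions with the convex function $-\log(1-z)$, transfer a subordination at the level of the integrands via Lemma~\ref{Rushlemma}, and then exponentiate.

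First, by Lemma~\ref{struct} there exists an analytic $q$ with $q(0)=0$ and $q\prec F_\alpha$ such that
\[
\frac{f(z)}{z} = \exp\!\left(\int_0^z \frac{q(t)}{t}\,dt\right),
\]
while from \eqref{fwave} and \eqref{F} the extremal $F$ satisfies
\[
F(z) = \exp\!\left(\int_0^z \frac{F_\alpha(t)}{t}\,dt\right).
\]
Since subordination is preserved under composition with any analytic map on the right (in particular the exponential: if $h=H\circ w$ with $w\in\mathfrak{B}$, then $\exp h=(\exp H)\circ w$), it will suffice to establish
\[
\int_0^z \frac{q(t)}{t}\,dt \;\prec\; \int_0^z \frac{F_\alpha(t)}{t}\,dt.
\]

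The key observation is that for every analytic $g(z)=\sum_{n\ge 1}a_n z^n$ in $\Delta$,
\[
\int_0^z \frac{g(t)}{t}\,dt \;=\; \sum_{n\ge 1}\frac{a_n}{n}\,z^n \;=\; g(z) * \bigl(-\log(1-z)\bigr),
\]
where $*$ denotes the Hadamard product. Thus each of the two integrals above is a Hadamard convolution of $q$, respectively $F_\alpha$, with $-\log(1-z)$, and the latter is a classical convex univalent function (a direct computation gives $1+z(-\log(1-z))''/(-\log(1-z))'=1/(1-z)$, which has real part exceeding $1/2$). Under the standing hypothesis $0\le\alpha\le 3-2\sqrt{2}$, Lemma~\ref{Dconv} tells us that $F_\alpha$ is itself convex univalent. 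Applying Lemma~\ref{Rushlemma} to the pair of subordinations $q\prec F_\alpha$ and $-\log(1-z)\prec -\log(1-z)$ then yields the desired subordination of the convolutions, and exponentiating gives \eqref{sub}.

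The only delicate point is the convexity of $F_\alpha$, which is precisely where the range $0\le\alpha\le 3-2\sqrt{2}$ enters: outside this range the Booth lemniscate $\partial D(\alpha)$ is concave and Lemma~\ref{Rushlemma} is no longer applicable in this form. Everything else is formal manipulation (recognising the integral as a Hadamard convolution with $-\log(1-z)$ and passing through $\exp$ at the end). Theorem~\ref{F(z)-1} is not strictly required for this argument, but it provides the complementary piece of information that the majorant $F$ has a (shifted) convex image, which is the natural geometric companion to \eqref{sub}.
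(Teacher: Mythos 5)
Your argument is correct and follows essentially the same route as the paper: both recognise $\int_0^z g(t)\,dt/t$ as the Hadamard product of $g$ with the convex function $-\log(1-z)$, invoke Lemma~\ref{Dconv} for the convexity of $F_\alpha$ when $0\le\alpha\le 3-2\sqrt{2}$, apply Lemma~\ref{Rushlemma}, and exponentiate. If anything, you are slightly more careful than the paper in justifying the final exponentiation step, where the paper's last display omits the exponential on the right-hand side.
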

\begin{proof}
  Let $0 \le \alpha \le 3-2\sqrt{2}$ and let $f$ be in the class $ \mathcal{BS}(\alpha)$. Then we have
  \begin{equation}\label{phi}
    \phi(z):=\frac{zf'(z)}{f(z)}-1\prec F_\alpha(z)\qquad (z\in \Delta),
  \end{equation}
  where $F_\alpha$ is given by \eqref{Falpha}. It is well known that the normalized function
  \begin{equation*}
    l(z)=\log \frac{1}{1-z}=\sum_{n=1}^{\infty}\frac{z^n}{n}\qquad (z\in \Delta),
  \end{equation*}
  belongs to the class $\mathcal{K}$ and for $f\in\mathcal{A}$ we get
  \begin{equation}\label{phil}
    \phi(z)*l(z)=\int_{0}^{z}\frac{\phi(t)}{t}{\rm d}t\quad {\rm and}\quad F_\alpha(z)*l(z)=\int_{0}^{z}\frac{F_\alpha(t)}{t}{\rm d}t.
  \end{equation}
  By Lemma \ref{Dconv} we deduce that the function $F_\alpha$ is convex. Thus applying Lemma \ref{Rushlemma} in \eqref{phi} we obtain
  \begin{equation}\label{philFl}
    \phi(z)*l(z)\prec F_\alpha(z)*l(z)\qquad (z\in \Delta).
  \end{equation}
  Now from \eqref{phil} and \eqref{philFl}, we can obtain
  \begin{equation*}
    \int_{0}^{z}\frac{\phi(t)}{t}{\rm d}t\prec \int_{0}^{z}\frac{F_\alpha(t)}{t}{\rm d}t\qquad (z\in \Delta).
  \end{equation*}
  Thus
\begin{equation*}
  \frac{f(z)}{z}= \exp\int_{0}^{z}\frac{\phi(t)}{t}{\rm d}t\prec \int_{0}^{z}\frac{F_\alpha(t)}{t}{\rm d}t=\frac{\tilde{f}(z)}{z}.
\end{equation*}
This completes the proof of theorem.
\end{proof}
Here by combining Theorem \ref{F(z)-1}, Theorem \ref{f/z sub} and \eqref{lambdamu}, we get:

\begin{theorem}
  Let $f\in \mathcal{BS}(\alpha)$, $0 \le \alpha \le 3-2\sqrt{2}$ and $|z|=r<1$. Then
  \begin{equation}\label{Re f/z}
    \left(\frac{1-r\sqrt{\alpha}}{1+\sqrt{\alpha}} \right)^{\frac{1}{2\sqrt{\alpha}}}\leq {\rm Re}\left(\frac{f(z)}{z}\right)\leq\left(\frac{1+r\sqrt{\alpha}}{1-r\sqrt{\alpha}} \right)^{\frac{1}{2\sqrt{\alpha}}}\qquad(z\in\Delta).
  \end{equation}
  The result is sharp.
\end{theorem}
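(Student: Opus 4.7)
The plan is to combine Theorem \ref{f/z sub}, Theorem \ref{F(z)-1} and the affine invariance rule \eqref{lambdamu}. Taking $\lambda=1$, $\mu=-1$ in \eqref{lambdamu} and using the subordination $f(z)/z\prec F(z)$ furnished by Theorem \ref{f/z sub}, one immediately obtains $f(z)/z-1\prec F(z)-1$. By Theorem \ref{F(z)-1} the dominant $F(z)-1$ is convex univalent in $\Delta$, so its image, and hence $F(\Delta)$ (being a translate of $(F-1)(\Delta)$), is a convex domain in $\mathbb{C}$.

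Next I would invoke the standard Schwarz-lemma consequence of subordination: if $g\prec h$ in $\Delta$ with $h$ univalent, then $g(\{|z|<r\})\subseteq h(\{|\zeta|<r\})$ for every $r\in (0,1)$. In our setting this says that for $|z|=r$ the point $f(z)/z$ belongs to the closed convex set $\overline{F(\{|\zeta|<r\})}$. Since the Taylor expansion of $F$ in \eqref{F} has only real coefficients (note that $\sqrt{\alpha}\in\mathbb{R}$ because $\alpha\ge 0$), this set is moreover symmetric with respect to the real axis.

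The theorem then follows from the elementary geometric observation that on a convex set $D$ symmetric about the real axis the extremal values of ${\rm Re}\,w$ are attained on the real axis: if $w_0\in\overline{D}$ is an extremizer of ${\rm Re}$, then by symmetry so is $\overline{w_0}$, and by convexity their midpoint ${\rm Re}\,w_0$ also lies in $\overline{D}$. For $D = F(\{|\zeta|<r\})$ the relevant real-axis boundary points are $F(-r)$ and $F(r)$, and substituting them from \eqref{F} yields the two sides of \eqref{Re f/z}. Sharpness is realised by the extremal function $\tilde f$ of \eqref{fwave} as $z\to \pm r$ along the real axis. No individual step looks delicate; the main thing to keep straight is the passage from the subordination $f(z)/z-1\prec F(z)-1$ to the pointwise real-part bounds via the convexity-and-symmetry argument, which is why Theorem \ref{F(z)-1} was proved first.
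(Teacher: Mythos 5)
Your argument is correct and is essentially the paper's own proof: both deduce $f(z)/z\prec F(z)$ from Theorem \ref{f/z sub}, use the convexity of $F(z)-1$ (Theorem \ref{F(z)-1}) together with the reality of its Taylor coefficients to conclude that the image of $|z|<r$ is a convex set symmetric about the real axis, so that ${\rm Re}(f(z)/z)$ is trapped between $F(-r)$ and $F(r)$, with sharpness given by $\tilde f$. The only discrepancy is typographical rather than logical: $F(-r)=\bigl(\frac{1-r\sqrt{\alpha}}{1+r\sqrt{\alpha}}\bigr)^{1/(2\sqrt{\alpha})}$, so the sharp lower bound your argument produces is this quantity, whereas the displayed inequality \eqref{Re f/z} has $1+\sqrt{\alpha}$ in the denominator.
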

\begin{proof}
  By the subordination principle, we have:
  \begin{equation*}
    f(z)\prec g(z)\Rightarrow f(|z|<r)\subset g(|z|<r)\qquad (0\leq r<1).
  \end{equation*}
  From Theorem \ref{F(z)-1}, since $F(z)-1$ is convex univalent in $\Delta$, and it is real for real $z$,
  thus it maps the disc $|z|=r<1$ onto a convex set symmetric which respect to the real axis laying between $F(-r)-1$ and $F(r)-1$.  Now the assertion is obtained from Theorem \ref{f/z sub}.
\end{proof}

\end{document}